\def\Real{\mathbf{R}} 
\def\Mat{M} 
\def\grse{E}  
\def\sgrse{S} 
\def\indse{J} 
\def\iindse{P} 
\def\sindse{I} 
\def\field{\mathbf{F}}
\def\kos{K}
\def\sss{\smallsetminus}
\theoremstyle{plain}
\newtheorem{proposition}{Proposition}[section]
\newtheorem{lemma}[proposition]{Lemma}
\theoremstyle{definition}
\newtheorem{definition}[proposition]{Definition}
\title{Efficient Representation of Lattice Path Matroids}
\author{Carles Padr\'o}
\affil{Universitat Polit\`ecnica de Catalunya, Barcelona, Spain}
\begin{document}

\maketitle

\begin{abstract}
Efficient deterministic algorithms to construct 
representations of lattice path matroids
over finite fields are presented.
They are built on known constructions
of hierarchical secret sharing schemes,
a recent characterization of hierarchical matroid ports, 
and the existence of isolating weight functions
for lattice path matroids whose values are polynomial 
on the size of the ground set.
\end{abstract}

\section{Introduction}
\label{sec:introd}

Every linear code determines a matroid, namely
the one that is represented by any of its generator matrices.
Several properties of the code are derived from that matroid.
For example, the weight enumerator of the code
is derived from the Tutte polynomial 
of the matroid~\cite{Gre76}.
Another example is the correspondence between 
maximum distance separable linear codes
and representations of uniform matroids.
Determining over which fields uniform matroids
are represented is equivalent to 
solving the  main conjecture for
maximum distance separable codes.
Detailed information on that conjecture
is given in~\cite[Problem~6.5.19]{Oxl11}.
Important advances for solving it
have been presented in~\cite{Bal12,BaBe12}.

Additional connections follow from applications of
linear codes other than plain error detection and correction,
as network coding~\cite{DFZ11} or 
locally repairable codes~\cite{CTL20,TPD16}.
Among them, secret sharing has attracted most attention, 
and it is the main motivation for our results.
Vector secret sharing schemes~\cite{Bri89}
are ideal and linear, so they are among 
the most efficient secret sharing schemes.
The access structures of  
vector secret sharing schemes
coincide with the ports of representable matroids.
Every representation of a matroid
over a finite field provides
a vector secret sharing scheme for
each of its ports.
The efficiency of those schemes
is determined by the size of the field.

Given a family of representable matroids, 
some basic questions are
motivated by those applications.
Over which (finite) fields can 
they be represented?
Which is the minimum size of those fields?
Are there efficient algorithms 
to find a representation for 
every member of the family?

This paper deals with 
efficient deterministic constructions
of representations of matroids over finite fields.
Specifically, deterministic algorithms
that provide, for each member in a given family
of representable matroids, a representation
over some finite field $\field_q$.
Both the running time and
the size $\log q$ of the elements in the finite field
must be polynomial in the number of elements 
in the ground set.

The existence of such algorithms is well known
for uniform and graphic matroids,
and it has been proved for matroids with two 
clonal classes~\cite{BPWX12}.
Efficient deterministic representations
for other families of matroids are derived 
from constructions of several classes of
vector secret sharing schemes,
namely hierarchical~\cite{Bri89,CTL22,Tas07},
compartmented~\cite{CTL19},
and uniform multipartite~\cite{CRHC24} schemes.
Some results on deterministic algorithms
for transversal matroids are given
in~\cite{LMPSZ18,MPRS20},
but no efficient algorithms are known for that class.

For other families of matroids,
only efficient randomized algorithms are known.
This is the case for transversal matroids.
The output of those algorithms 
is correct with high probability,
but there is no efficient way to check
whether or not this is the case.

In this paper, we present efficient deterministic 
algorithms for lattice path matroids,
a family of transversal matroids
introduced in~\cite{BMN03}.
Even though the existence of such algorithms
has not been explicitly stated before,  
it directly follows from
previous works in secret sharing.
Namely, the constructions of 
hierarchical vector secret 
sharing schemes in~\cite{Bri89,CTL22,Tas07}
and a recent characterization of the matroids
determined by those schemes,
which were proved to coincide with
lattice path matroids~\cite{Mo23}.
In addition to pointing out
and explaining that connection, 
the main contribution in this paper
is a simpler and more general description
of those constructions.
Specifically, by using isolating weight functions
in a similar way as in~\cite{LMPSZ18},
a general method to 
find representations of transversal matroids
is presented.
It is efficient if the values 
of the isolating weight functions
are polynomial in the size of the ground set.
The existence of such functions
is proved for lattice path matroids.
The application to other
families of transversal matroids remains an open question. 

Our algorithms provide two kinds of 
representations of lattice path matroids.
The first one works for
large algebraic extensions of relatively small prime fields
and it corresponds to the constructions of
hierarchical secret sharing schemes in~\cite{Bri89,CTL22}.
The second one deals with
large prime fields.
Another such construction was proposed in~\cite{Tas07},
but it applies only to nested matroids.

\section{Preliminaries}

The main concepts and known results
together with the terminology and notation
that are used in the paper are explained in this section.
After presenting the basics on
polymatroids and matroids, we discuss
the connection between transversal 
matroids and Boolean polymatroids,
which is used later to describe
the two existing characterizations~\cite{FaPa10,Mo23} of
hierarchical matroid ports,
that is, the access structures 
of ideal hierarchical secret sharing schemes.
In addition, we review some facts and open questions
on ideal multipartite secret sharing schemes
and multi-uniform matroids 
that are the initial point of this work.
Polymatroids play a fundamental role in that topic.

\subsection{Polymatroids, Matroids, and Matroid Ports}

The reader is  referred to~\cite{Oxl11}
for a textbook on matroid theory.
Most of the time we use here the terminology
and notation from it. 
More information about
polymatroids, matroid ports,
and their application to secret sharing
is found in~\cite{MaPa10}. 

A \emph{set function} $f \colon 2^{\grse} \to \Real$
on a finite set $\grse$ is \emph{monotone}
if $f(X) \le f(Y)$ whenever 
$X \subseteq Y \subseteq \grse$,
and it is \emph{submodular} if
\(
f(X) + f(Y) - f(X \cup Y) - f(X \cap Y) \ge 0
\)
for all $X,Y \subseteq \grse$.
A \emph{polymatroid} is a pair
$(\grse,f)$ formed by a  \emph{ground set}
$\grse$ and a \emph{rank function} $f$.
The former is a finite set
and the latter is a 
monotone, submodular set 
function on $\grse$ with $f(\emptyset) = 0$.
\emph{Integer polymatroids} are those with
integer-valued rank functions.
From now on, only integer polymatroids are considered.

An integer polymatroid $\Mat = (\grse,r)$ with
$r(\{x\}) \le 1$ for each $x \in \grse$
is a \emph{matroid}.
The \emph{independent sets}
of the matroid $\Mat$ are the
subsets of the ground set with
$r(X) = |X|$.
A \emph{basis} is a maximal independent set
and a \emph{circuit} is a minimal dependent set.
All bases have $r(\grse)$ elements, 
and that value is called the \emph{rank} of the matroid. 

For an element $p_o$ in the ground set $\grse$,
the \emph{port of the matroid $\Mat$ at $p_o$}
is formed by the sets 
$X \subseteq E \sss \{p_o\}$
such that $r(X \cup \{p_o\}) = r(X)$.
Observe that the minimal sets in the
matroid port are the ones
such that $X \cup \{p_o\}$ is a circuit. 
A matroid is \emph{connected}
if every two different elements
in the ground set lie in a common circuit.
As a consequence
of~\cite[Theorem~4.3.3]{Oxl11}, a 
connected matroid is determined 
by any of its ports.

Given a matrix $A$ over a field $\kos$
with columns indexed by a set $\grse$
and a set  $X \subseteq \grse$,
let $r(X)$ be the rank of the submatrix
formed by the columns corresponding to 
the elements in $X$. 
Then $\Mat = (E,r)$ is a matroid.
In that situation,
$\Mat$ is \emph{representable} over $\kos$,
or $\kos$-representable, and the matrix $A$ is a 
\emph{representation} of $\Mat$ over $\kos$. 

While representations of matroids are 
collections of vectors (the columns of a matrix)
some polymatroids can be represented by
collections of vector subspaces.
A polymatroid $(E,f)$ is 
\emph{$\kos$-representable} if there exists  a collection 
$(V_x)_{x \in \grse}$
of subspaces of a $\kos$-vector space $V$ such that
\(
f(X) = \dim 
\sum_{x \in X} V_x
\)
for every $X \subseteq E$.

\subsection{Transversal Matroids and Boolean Polymatroids}

We discuss next some basic facts about
transversal matroids, Boolean polymatroids,
and lattice path matroids.
The reader is referred 
to~\cite{Bon10,BoMi06,BMN03,Mat01,Oxl11}
for additional information on those topics.

For an integer polymatroid 
$(\sgrse,f)$, consider the family 
formed by the subsets $X \subseteq \sgrse$ such that 
$|Y| \le f(Y)$ for every $Y \subseteq X$.
By~\cite[Corollary~11.1.2]{Oxl11}, 
that is the family of independent sets
of a matroid, which is
called the \emph{matroid induced by 
the polymatroid $(\sgrse,f)$}.

Let $G$ be a bipartite
graph with vertices in the parts
$\indse$
and $\sgrse$.
For a set $X$ of vertices,
$N(X)$ denotes the set of neighbors of the vertices in $X$.
If $B \subseteq \sgrse$, we
notate $G_B$ for the subgraph of $G$
induced by $\indse \cup B$.
The \emph{biadjacency matrix} of
the bipartite graph $G$ is a
$(0,1)$-matrix whose rows and columns are indexed 
by the sets $\indse$ and $\sgrse$, respectively, 
and the entries equal to $1$ mark the edges of $G$.

The graph $G$ determines two sequences of sets.
Namely, $(C_x \,:\, x \in \sgrse)$ with 
$C_x = N(\{x\}) \subseteq \indse$
and $(A_j \,:\, j \in \indse)$ with 
$A_j = N(\{j\}) \subseteq \sgrse$.
Observe that the sets in those sequences
may not be distinct.

A set $X \subseteq S$ is 
a \emph{partial transversal}
of the sequence $(A_j \,:\, j \in \indse)$
if there is an injective map
$\varphi \colon X \to J$ such that
$x \in A_j$ if $\varphi(x) = j$.
Those partial transversals are the independent sets of a 
\emph{transversal matroid} $\Mat$
with ground set $\sgrse$.
Observe that  $X \subseteq \sgrse$
is an independent set of $\Mat$
if and only if  there is a matching
in $G$ covering all vertices in $X$.
The sequence $(A_j \,:\, j \in \indse)$
of subsets of $\sgrse$ and, equivalently, the graph $G$
provide a \emph{presentation} 
of the transversal matroid $\Mat$.
A transversal matroid may admit different presentations, 
but there exist presentations such that
the size of $\indse$ equals the rank 
of the matroid~\cite[Theorem~2.6]{Bon10}.
From now on, we always assume that this is the case,
that is, we assume that there is a matching in $G$ 
with $|\indse|$ edges.
In that situation, $B \subseteq \sgrse$ is a basis 
of $\Mat$ if and only if the
subgraph $G_B$ has a  perfect matching. 

The sequence $(C_x \,:\, x \in S)$
of subsets of $\indse$
determines a \emph{Boolean polymatroid} 
with ground set $\sgrse$.
Namely, the polymatroid $(\sgrse,f)$ with
\(
f(X) = |N(X)| =
\left|
\bigcup_{x \in X} C_x 
\right|
\) 
for every $X \subseteq \sgrse$.
By Hall's marriage theorem,
$X \subseteq \sgrse$ is an independent
set of the transversal matroid 
$\Mat$ determined by $G$ if and only if
$|Y| \le |N(Y)| = f(Y)$ for every $Y \subseteq X$.
Therefore, a matroid is transversal
if and only if it is induced by a Boolean polymatroid.

\emph{Lattice path matroids},
which were introduced
in~\cite{BMN03},
are a special class of transversal matroids.
As a consequence of \cite[Lemma~4.7]{Bon10}
the following definition is equivalent
to the one in~\cite{BMN03}.
For positive integers $m,n$, with $m \le n$, we notate
$[m,n] = \{m, m+1, \ldots, n\}$ and
$[n] = [1,n]$. 

\begin{proposition}
\label{st:interv}
Let $G$ be a bipartite graph with parts
$\indse = [r]$ and $\sgrse = [n]$.
Then the following conditions are equivalent.
\begin{enumerate} 
\item
There are sequences 
$(a_1, \ldots, a_r)$ 
and $(b_1, \ldots, b_r)$ in $\sgrse$ with
$1 = a_1 \le a_2 \le \cdots \le a_r$
and $b_1 \le b_2 \le \cdots \le b_r = n$ such that
$a_j \le b_j$ and $A_j  = [a_j,b_j]$ 
for every $j \in \indse$.
\item
There are sequences 
$(c_1, \ldots, c_n)$ and
$(d_1, \ldots, d_n)$ in $\indse$ with
$1 = c_1 \le c_2 \le \cdots  \le c_n$ 
and $d_1 \le d_2 \le \cdots \le d_n = r$
such that $c_x \le d_x$ and 
$C_x = [c_x,d_x]$ for every $x \in \sgrse$.
\end{enumerate}
\end{proposition}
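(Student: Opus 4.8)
The plan is to read both conditions off the biadjacency matrix of $G$ --- the $r\times n$ $\{0,1\}$-array with rows indexed by $\indse=[r]$, columns indexed by $\sgrse=[n]$, and a $1$ in position $(j,x)$ exactly when $x\in A_j$ --- and then to exploit the symmetry between them. Condition~(1) says exactly that the support of every row is a nonempty interval of columns, that the left endpoints of these row intervals are weakly increasing down the rows, that the right endpoints are weakly increasing as well, that the first row meets column $1$, and that the last row meets column $n$. Condition~(2) is the same assertion with the words ``row'' and ``column'' interchanged; equivalently, Condition~(2) for $G$ is verbatim Condition~(1) for the bipartite graph obtained from $G$ by swapping its two parts (whose biadjacency matrix is the transpose). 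Hence it suffices to prove the single implication (1)$\Rightarrow$(2) and then apply it to the swapped graph to obtain (2)$\Rightarrow$(1); note that the one nondegeneracy hypothesis needed below is itself invariant under swapping the parts, so this reduction is legitimate.

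To prove (1)$\Rightarrow$(2), suppose we are given $(a_1,\dots,a_r)$ and $(b_1,\dots,b_r)$ as in~(1), and for each $x\in\sgrse$ set
\[
c_x=\min\{\,j\in\indse : b_j\ge x\,\},\qquad
d_x=\max\{\,j\in\indse : a_j\le x\,\}.
\]
Both are well defined, since $b_r=n\ge x$ and $a_1=1\le x$. Monotonicity of $(b_j)$ makes $\{\,j:b_j\ge x\,\}$ upward closed, hence equal to $[c_x,r]$, and monotonicity of $(a_j)$ makes $\{\,j:a_j\le x\,\}$ downward closed, hence equal to $[1,d_x]$. Since $j\in C_x$ precisely when $a_j\le x\le b_j$, this gives $C_x=[c_x,r]\cap[1,d_x]=[c_x,d_x]$. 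As $x$ increases the first set only shrinks and the second only grows, so $(c_x)$ and $(d_x)$ are weakly increasing; and $c_1=1$ because every $b_j\ge1$, while $d_n=r$ because every $a_j\le n$. Thus all the requirements of Condition~(2) are met except possibly the inequality $c_x\le d_x$.

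Establishing $c_x\le d_x$ for every $x$ --- equivalently, that no interval $C_x$ is empty, equivalently $\bigcup_{j\in\indse}A_j=\sgrse$, i.e.\ that $G$ has no isolated vertex in the part $\sgrse$ --- is the point I expect to need real care, and the only place where more than bookkeeping is used: without the (here implicit) assumption that $G$ has no isolated vertices the two conditions are not equivalent, as the graph with $\indse=[2]$, $\sgrse=[3]$, $A_1=\{1\}$, $A_2=\{3\}$ shows, since it satisfies~(1) while having $C_2=\emptyset$. Granting that assumption, one argues as follows: if $C_x=\emptyset$, put $j=d_x\ge1$; then $j<r$ (otherwise $a_r\le x\le n=b_r$ would force $x\in A_r$), while $b_j<x$ because $x\notin A_j=[a_j,b_j]$ and $a_j\le x$, and $a_{j+1}>x$ by maximality of $d_x$; by monotonicity it follows that $b_{j'}\le b_j<x$ for all $j'\le j$ and $a_{j'}\ge a_{j+1}>x$ for all $j'\ge j+1$, so $x$ lies in no $A_{j'}$ at all, i.e.\ $x$ is isolated --- a contradiction. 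Hence $c_x\le d_x$ always, the sequences $(c_x)$ and $(d_x)$ satisfy Condition~(2), and this proves (1)$\Rightarrow$(2); the transpose remark of the first paragraph then completes the proof.
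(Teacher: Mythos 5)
Your proof is correct and complete; note that the paper itself offers no proof of this proposition, citing it as a consequence of Lemma~4.7 of Bonin's lecture notes, so there is no in-paper argument to compare against. Your route --- defining $c_x=\min\{j: b_j\ge x\}$ and $d_x=\max\{j: a_j\le x\}$, observing that monotonicity of $(a_j)$ and $(b_j)$ makes the relevant index sets down- and up-closed so that $C_x=[1,d_x]\cap[c_x,r]$, and then disposing of $(2)\Rightarrow(1)$ by transposing the biadjacency matrix --- is the natural one and works. Two remarks. First, your treatment of the inequality $c_x\le d_x$ is more elaborate than necessary: since the paper defines $C_x=N(\{x\})$, the statement ``$C_x=\emptyset$'' is by definition the statement ``$x$ is isolated,'' so once you have $C_x=[1,d_x]\cap[c_x,r]$ the equivalence of $c_x\le d_x$ with $x$ having a neighbour is immediate, and the three-step derivation locating $d_x$ and $d_x+1$ re-proves something you already have. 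Second, your observation that the equivalence fails literally without assuming $G$ has no isolated vertices (your example $A_1=\{1\}$, $A_2=\{3\}$ in $[3]$ satisfies~(1) but has $C_2=\emptyset$) is a legitimate catch: an element $x$ with $C_x=\emptyset$ is a loop of the transversal matroid, and the paper leaves this nondegeneracy hypothesis implicit here even though it states ``without loops'' explicitly in Proposition~\ref{st:charlpm2}. Your reduction of $(2)\Rightarrow(1)$ to $(1)\Rightarrow(2)$ via the swapped graph is sound because, as you say, the no-isolated-vertices hypothesis is symmetric in the two parts.
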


\begin{definition}
A \emph{lattice path matroid}
is a transversal matroid that admits
a presentation with the conditions
of Proposition~\ref{st:interv}.
It is a \emph{nested matroid}
(also called \emph{generalized Catalan matroid})
in the particular case that
it admits such a presentation with
$b_1 = n$ or, equivalently, $c_n = 1$. 
\end{definition}

\subsection{Vector Secret Sharing Schemes}
\label{pt:BrDa}

The reader is referred to~\cite{Bei11} for a
comprehensive survey on secret sharing.
In a \emph{secret sharing scheme}, a 
secret value is distributed into 
\emph{shares} among some \emph{players}
in such a way that only some \emph{qualified} sets of 
players are able to recover the secret from their shares.
The qualified sets form the \emph{access structure},
which is a \emph{monotone} family of sets of players.
That is, every set containing a qualified set is qualified.
A secret sharing scheme is 
\emph{perfect} if the shares from an unqualified set
do not provide any information on the secret value,
and it is \emph{ideal} if,
in addition, each share has the same size 
as the secret value, which is the optimal case.
Brickell and Davenport~\cite{BrDa91} proved that
the access structure of every ideal
secret sharing scheme is a matroid port.

Vector secret sharing schemes are ideal schemes
determined by linear codes.
A \emph{linear code} 
of \emph{length} $n$ 
over a finite field $\kos$ is a vector subspace $C \subseteq K^n$.
The rows of a \emph{generator matrix} form a basis of $C$.
Such a linear code $C$ determines a 
\emph{vector secret sharing scheme} as follows.
Given a secret value $s \in \kos$, choose uniformly at random a
code word $c = (c_1, c_2,\ldots, c_n) \in C$ with $c_1 = s$, 
and distribute the shares $c_2, \ldots, c_n$ among
the $n-1$ players in the scheme.
A set $X$ is qualified
if and only if the first column 
of the generator matrix
is a linear combination of the columns
corresponding to the players in $X$.
Let $\Mat$ be the $\kos$-representable 
matroid associated to the linear code $C$,
that is, the matroid represented by the generator matrix. 
The access structure of the secret sharing scheme
is the port of $\Mat$ at
the element in the ground set corresponding to the first column.
Therefore, the access structures of
vector secret sharing schemes
are the ports of representable matroids.
Each representation of a matroid over a finite field
provides vector secret sharing schemes for its ports
and, conversely, a representation of a matroid
over a finite field is obtained from 
a vector secret sharing scheme
for any of its ports.

\subsection{Matroids with Large Clonal Classes}

Two elements in the ground set
of a matroid are \emph{clones} if the map that
interchanges them and let all other elements fixed
is an automorphism of the matroid.
The equivalence classes of that 
equivalence relation are the \emph{clonal classes} of the matroid,
For example, uniform matroids are those having only one clonal class.

In a secret sharing scheme, 
players $x$ and $y$ are \emph{clones} if, 
for every set $A$ of players with $x,y \notin A$,
the set $A \cup \{x\}$ is qualified
if and only if so is $A \cup \{y\}$.
That is, they play the same role in the scheme.
If the access structure is a matroid port,
two players are clones if and only if 
they are clones in the matroid.

\begin{definition}
A matroid $\Mat$ is \emph{$\Pi$-uniform} for some
partition $\Pi = (\sgrse_i \,:\, i \in \iindse)$ 
of the ground set if
all elements in the same part are clones.
That is, each $\sgrse_i$ is a subset of a clonal class.
If $|\iindse| = m$ , we say that $\Mat$ is 
\emph{$m$-uniform}.
\end{definition}

For a partition $\Pi$ of the set of players,
\emph{$\Pi$-uniform access structures}
are defined analogously.
A secret sharing scheme is said to be 
\emph{multipartite} if its access structure 
is $m$-uniform, specially when $m$ is much smaller
than the number of players.
Ideal multipartite schemes have been studied by several 
authors~\cite{BTW08,Bri89,CTL19,CTL22,FMP07,FaPa10,PaSa00,Sim88,Tas07,TaDy09}.
Their access structures are ports of 
$m$-uniform matroids,
which are called \emph{$m$-partite} in
the works on secret sharing.
The main examples are 
\emph{compartmented} and
\emph{hierarchical} secret sharing schemes.

Let $\Mat = (\sgrse,r)$ be a $\Pi$-uniform
matroid with $\Pi = (\sgrse_i \,:\, i \in \iindse)$.
Associated to $\Mat$, consider the 
integer polymatroid $(\iindse,g)$ with
\[
g(\sindse) = 
r \left(
\bigcup_{i \in \sindse} \sgrse_i
\right) 
\]
for every $\sindse \subseteq \iindse$.
The matroid $\Mat$ 
is determined by the 
integer polymatroid $(\iindse,g)$
and the partition $\Pi$. 
Indeed, consider the map 
$\pi \colon \sgrse \to \iindse$
with $\pi(x) = i$ if $x \in \sgrse_i$
and the polymatroid
$(\sgrse,f)$ with
$f(X) = g(\pi(X))$ for each $X \subseteq \sgrse$.
Then $\Mat$ is the matroid induced 
by the polymatroid $(S,f)$.
The following result was proved
in~\cite[Theorem~6.1]{FMP07}.

\begin{proposition}
Consider a  $\Pi$-uniform matroid 
$\Mat = (\sgrse,r)$   with $\Pi = (\sgrse_i \,:\, i \in \iindse)$
and its associated polymatroid $(\iindse,g)$.
There exists an integer $q(\Mat)$ such that
$\Mat$ is $\kos$-representable
if the field $\kos$ has at least 
$q(\Mat)$ elements and $(\iindse,g)$ is 
$\kos$-representable.
\end{proposition}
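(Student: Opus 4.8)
The plan is to convert a $\kos$-representation of the polymatroid $(\iindse,\gap)$ into a $\kos$-representation of $\Mat$ by replacing each subspace $V_i$ with a sufficiently generic family of $|\sgrse_i|$ vectors chosen inside it. Recall from the paragraph preceding the statement that $\Mat$ is the matroid induced by the polymatroid $(\sgrse,f)$ with $f(X)=\gap(\pi(X))$, where $\pi\colon\sgrse\to\iindse$ sends each $x$ to the index of the part containing it; thus $X\subseteq\sgrse$ is independent in $\Mat$ exactly when $|Y|\le f(Y)$ for every $Y\subseteq X$. Assume $(\iindse,\gap)$ is represented over $\kos$ by subspaces $(V_i)_{i\in\iindse}$ of a $\kos$-vector space $V$, and put $W_x=V_{\pi(x)}$ for $x\in\sgrse$. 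Since $\sum_{x\in X}W_x=\sum_{i\in\pi(X)}V_i$, we get $\dim\sum_{x\in X}W_x=\gap(\pi(X))=f(X)$ for all $X\subseteq\sgrse$, so $(W_x)_{x\in\sgrse}$ represents $(\sgrse,f)$. Note that for \emph{any} vectors $u_x\in W_x$ and any $X\subseteq\sgrse$, linear independence of $(u_x)_{x\in X}$ forces $|Y|=\dim\langle u_x:x\in Y\rangle\le\dim\sum_{x\in Y}W_x=f(Y)$ for every $Y\subseteq X$, hence forces $X$ to be independent in $\Mat$. Therefore the matroid represented by $(u_x)_{x\in\sgrse}$ equals $\Mat$ as soon as, conversely, every independent set of $\Mat$ is sent to a linearly independent family. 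So it suffices to produce vectors $u_x\in W_x$ with that last property, provided $\kos$ is large enough.

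The crucial ingredient is a linear analogue of Hall's marriage theorem: \emph{if $(U_x)_{x\in B}$ is a finite family of subspaces of a vector space over $\kos$ with $\dim\sum_{x\in Y}U_x\ge|Y|$ for every $Y\subseteq B$, then there exist $v_x\in U_x$ $(x\in B)$ that are linearly independent.} I would prove this by induction on $|B|$, splitting into two cases. If every nonempty $Y\subsetneq B$ has strict surplus $\dim\sum_{x\in Y}U_x\ge|Y|+1$, pick $x_0\in B$ and a nonzero $v_{x_0}\in U_{x_0}$, pass to the quotient $V/\langle v_{x_0}\rangle$ (which lowers each $\dim\sum_{x\in Y}U_x$ by at most one, so the Hall inequalities persist for subsets of $B\sss\{x_0\}$), apply the induction hypothesis, and lift the resulting independent vectors back into the $U_x$; together with $v_{x_0}$ they are independent. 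Otherwise some nonempty $Y\subsetneq B$ is tight, $\dim\sum_{x\in Y}U_x=|Y|$; set $U=\sum_{x\in Y}U_x$, use induction inside $U$ to get independent $(v_x)_{x\in Y}$ (which then span $U$), use induction in $V/U$ on $\bigl((U_x+U)/U\bigr)_{x\in B\sss Y}$ — whose Hall condition is inherited from the inequalities for supersets of $Y$ — to get $(v_x)_{x\in B\sss Y}$ independent modulo $U$, lift, and combine. Applied with $U_x=W_x$ and $B$ any independent set of $\Mat$, the hypothesis holds because $|Y|\le f(Y)=\dim\sum_{x\in Y}W_x$ for all $Y\subseteq B$.

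It remains to make the choice of the $u_x$ work simultaneously for all independent sets, which is a routine genericity argument. For each $i\in\iindse$ fix a basis of $V_i$ and, for each $x\in\sgrse_i$, let the coordinates of $u_x$ in that basis be fresh indeterminates; collect them into a tuple $\mathbf t$, and fix a basis of $V$, so that each $u_x$ is a column of linear forms in $\mathbf t$ over $\kos$. For a fixed independent set $X$ of $\Mat$, the linear Hall lemma gives a specialization of $\mathbf t$ in $\kos$ making $(u_x)_{x\in X}$ independent, so some $|X|\times|X|$ minor $\delta_X(\mathbf t)$ of the matrix with columns $(u_x)_{x\in X}$ is a nonzero polynomial, of degree at most $|X|\le r(\sgrse)$. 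Let $P(\mathbf t)$ be the product of the $\delta_X$ over all independent sets $X$ of $\Mat$; it is a nonzero element of the polynomial ring $\kos[\mathbf t]$ of degree $D\le 2^{|\sgrse|}\,r(\sgrse)$. If $|\kos|>D$, the Schwartz--Zippel lemma yields a point $\mathbf a\in\kos^{|\mathbf t|}$ with $P(\mathbf a)\neq0$; specializing $\mathbf t\mapsto\mathbf a$ makes every $\delta_X$ nonzero, hence $(u_x)_{x\in X}$ is linearly independent for every independent set $X$ of $\Mat$, and $(u_x)_{x\in\sgrse}$ represents $\Mat$ over $\kos$. Taking $q(\Mat)=2^{|\sgrse|}\,r(\sgrse)+1$, which depends only on $\Mat$ and not on the chosen representation of $(\iindse,\gap)$, completes the proof.

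The only genuinely non-routine step is the linear version of Hall's theorem, and inside it the verification that the Hall inequalities survive the two quotient reductions; the reduction to a genericity statement and the Schwartz--Zippel counting are bookkeeping. No attempt is made here to optimize $q(\Mat)$ — the explicit constructions developed later in the paper produce much smaller fields for lattice path matroids.
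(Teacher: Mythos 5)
Your proof is correct. Note first that the paper itself does not prove this statement: it is quoted from [FMP07, Theorem~6.1], so there is no in-paper argument to compare against. Your argument is a complete, self-contained proof along the same general lines as the cited source: pull the polymatroid representation back to subspaces $W_x=V_{\pi(x)}$, observe that linear independence of any choice of vectors $u_x\in W_x$ already forces independence in the induced matroid, use Rado's theorem on independent transversals of subspaces (your ``linear Hall lemma'', whose two-case induction is the standard one and is carried out correctly, including the verification that the Hall inequalities survive both quotient reductions) to certify that each independent set of $\Mat$ is individually realizable, and then make one simultaneous generic choice via Schwartz--Zippel. The only remarks worth making are cosmetic: you could take the product $P$ only over the bases of $\Mat$ (independence of subfamilies is automatic), which improves $q(\Mat)$ from $2^{|\sgrse|}r(\sgrse)+1$ to roughly $\binom{|\sgrse|}{r(\sgrse)}r(\sgrse)+1$; and you implicitly rely on the paper's (unproved but stated) assertion that $\Mat$ is the matroid induced by $(\sgrse,f)$, which is legitimate since the statement appears in the paragraph preceding the proposition.
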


Nevertheless, no efficient deterministic methods are known
to find representations of matroids
with large clonal classes
from representations of the associated polymatroids,
which lead to the open problem posed
in~\cite[Open Problem~6.9]{FMP07} 
and~\cite[Section~VII]{FPXY14}. 
Preliminary versions of 
that problem are found
in~\cite{Bri89,Tas07,TaDy09}.
Solutions for some classes of
multi-uniform matroids are given 
in~\cite{BPWX12,Bri89,CRHC24,CTL19,CTL22,Tas07}.

\subsection{Hierarchical Secret Sharing and Lattice Path Matroids}

In an access structure, 
a player $x$ is \emph{hierarchically inferior}
to a player $y$ if, 
for every set $A$
of players with $x,y \notin A$,
the set $A \cup \{y\}$ is qualified if 
so is the set $A \cup \{x\}$.
In that situation, we write $x \preceq y$. 
Observe that $x,y$ are clones if and only if 
$x \preceq y$ and $y \preceq x$.
An access structure is 
\emph{hierarchical} if that preorder in the set of players is total.
\emph{Hierarchical secret sharing schemes}
are those having a hierarchical access structure.

Efficient deterministic constructions of
vector secret sharing schemes 
were presented in~\cite{Bri89,Tas07}
for the so-called 
\emph{hierarchical threshold access structures}.
The construction in~\cite{Bri89} 
was generalized in~\cite{CTL22} to
all hierarchical matroid ports,
which had been previously characterized in~\cite{FaPa10}
in terms of multi-uniform matroids
induced by Boolean polymatroids.
An alternative characterization 
has been recently found~\cite{Mo23},
which is summarized in the following.
If $\Mat$ is a lattice path matroid
on the ground set  $\sgrse = [n]$,
then the ports of $\Mat$ at each of the
elements $p_o = 1$ and $p_o = n$ are
hierarchical access structures.
Conversely, every hierarchical matroid port is of that form.
In particular, hierarchical threshold access structures
are the ports of nested matroids. 
Moreover, the hierarchical order
is compatible with the order in 
the ground set.
Specifically, in the port of $\Mat$ at
$p_o = 1$, a player $x$ is hierarchically inferior
to a player $y$ if $1 < y \le x \le n$
while the hierarchical order is reversed
in the port of $\Mat$ at $p_o = n$. 

Proposition~\ref{st:interv} clarifies
the connection between those two characterizations of
hierarchical matroid ports.
While the characterization in~\cite{FaPa10}
uses the Boolean polymatroid determined
by the sets $C_x$, 
the one in~\cite{Mo23} focuses on the 
lattice path matroid determined by the sets $A_j$.

Therefore, the constructions 
from~\cite{Bri89,Tas07} 
and the ones from~\cite{CTL22}
provide efficient deterministic algorithms to
find representations  over finite fields
for nested matroids
and, respectively, lattice path matroids.
The  method in~\cite{Bri89,CTL22} provides representations
over algebraic field extensions of large degree,
while the one in~\cite{Tas07},
which applies only to nested matroids,
it is based on Birkhoff interpolation
and yields representations over large prime fields.
In the following sections, we give an
alternative description of the former
and we present a new construction over prime fields that
applies to all lattice path matroids.

\section{Representations of Transversal Matroids}


It is well known that representations 
for a transversal matroid $\Mat$ are obtained by
modifying the biadjacency matrix of a presentation $G$.
Indeed,
for each edge $(j,x)$ of $G$,
replace the corresponding entry (equal to $1$) 
in the biadjacency the matrix with a 
variable $\alpha_{j,x}$.
Take an arbitrary field $K$ and assume that 
the entries of the matrix are polynomials
over $\kos$ in the variables
$\alpha_{j,x}$.
Clearly, the determinant of the 
square submatrix formed by
the columns corresponding
to a set $B \subseteq \sgrse$ with $|B| = r$ is
a non-zero polynomial
if $B$ is a basis of $\Mat$ and it is zero otherwise. 
At this point, representations for $\Mat$ 
are obtained  by assigning
values to the variables $\alpha_{j,x}$.
One possibility is considering that
$\alpha_{j,x}$ are algebraically independent
elements over $\kos$ in some extension field.
In addition, for every sufficiently large field $\kos$, 
it is possible to substitute 
the variables $\alpha_{j,x}$
by elements in $\kos$ in such a way that 
the value of every polynomial 
corresponding to a basis of $\Mat$ is non-zero.
Nevertheless, it is not clear how to efficiently choose
those elements.  
We describe next two methods to assign values to
the variables $\alpha_{j,x}$
from a weight function on
the edges of the graph.

\begin{definition}
A weight function 
with non-negative integer values on the
edges of $G$ is \emph{isolating}
if, for every basis
$B$ of the transversal matroid $\Mat$, 
among the perfect matchings of $G_B$
there is only one with minimum weight.
\end{definition}

Every bipartite graph admits an isolating weight function.
Indeed, enumerate the edges
$\{e_0, e_1.\ldots, e_{m-1}\}$
and take $w(e_k) = 2^k$.
Nevertheless, the methods that are described in the following
provide efficient representations for a family of transversal matroids
only if the values of the isolating weight functions are polynomial
in the size of the ground set.
We need the following result about 
the construction of irreducible polynomials
over finite fields.

\begin{proposition}[\cite{Sho90}, Theorems~3.2 and~4.1]
\label{st:shoup}
If $p$ is a prime number, there is 
a deterministic algorithm to find an irreducible polynomial
over $\field_p$ of degree $s$.
Its running time is 
$O(p^{1/2} s^4)$ 
ignoring powers of $\log s$ and $\log p$.
If $q = p^d$, there is a deterministic algorithm
to find an irreducible polynomial over 
$\field_{q}$ of degree $s$ that runs in time
$O(p^{1/2} s^3 + s^4 d^2)$
ignoring powers of $\log s$ and $\log p$.
\end{proposition}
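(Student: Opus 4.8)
The plan is to reconstruct Shoup's argument as the usual three-layer reduction: from arbitrary degree to prime-power degree via field compositum, from prime-power degree to prime degree via an iterated tower of extensions, and from prime degree to a concrete computation governed by a cyclotomic/Gauss-period construction together with deterministic polynomial factorization over $\field_p$, the latter being the only step that is not outright polynomial-time.

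First I would reduce to prime powers. Factor $s = \prod_i \ell_i^{k_i}$. Given irreducible polynomials $g_i$ over $\field_p$ of the pairwise coprime degrees $\ell_i^{k_i}$, the field $\field_{p^s}$ is the compositum of the $\field_p[x]/(g_i)$, and a deterministically chosen element of the tensor product $\prod_i \field_p[x]/(g_i)$ (for instance the sum of the residue classes of $x$, or a short explicit linear combination) is a generator; its minimal polynomial, of degree $s$, is obtained by linear algebra over $\field_p$ on a space of dimension $s$. This costs only $\mathrm{poly}(s,\log p)$ — no $p^{1/2}$ enters here — so it suffices to build irreducibles of degree $\ell^k$ for a prime $\ell$. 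For that, assuming a subroutine that produces a degree-$\ell$ irreducible over any given $\field_{p^m}=\field_p[x]/(g)$, I would build the tower $\field_{p^\ell}\subset\field_{p^{\ell^2}}\subset\cdots\subset\field_{p^{\ell^k}}$ one layer at a time and then pull the minimal polynomial of a generator of the top field back down to $\field_p$ by linear algebra; the case $\ell = p$ is handled separately and cheaply using Artin--Schreier polynomials $x^p - x - a$ for a suitable $a$. Thus everything hinges on the degree-$\ell$-over-a-given-field routine, which is exactly why the two halves of the statement (Theorems~3.2 and~4.1 of~\cite{Sho90}) bootstrap each other.

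The crux is therefore producing a degree-$\ell$ irreducible polynomial over the base field $\kos\in\{\field_p,\field_q\}$. Here I would use an auxiliary prime $r$ with $\ell\mid r-1$ and with the multiplicative order of $|\kos|$ modulo $r$ divisible by $\ell$, subject to a non-degeneracy condition guaranteeing that the associated Gauss period $\eta=\sum_{h\in H}\zeta_r^h$ (with $H\le(\Zahl/r\Zahl)^\times$ of index $\ell$) generates $\field_{|\kos|^\ell}$ over $\kos$; one shows such an $r$ of size $\mathrm{poly}(\ell,\log|\kos|)$ exists by an Adleman--Lenstra-type argument, and then tests candidates deterministically. To extract the minimal polynomial of $\eta$ one reduces $\Phi_r$ over $\kos$ and factors it (or computes the $H$-period directly). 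Deterministic factorization of a polynomial of degree $O(r)$ over $\field_p$ costs $\widetilde{O}(p^{1/2}\cdot r^{O(1)})$: distinct-degree factorization is purely polynomial, while equal-degree factorization needs a quadratic nonresidue of $\field_p$, obtained by exhaustive search in $\widetilde{O}(p^{1/2})$ — this is the sole source of the $p^{1/2}$. Over $\field_q=\field_{p^d}$ the nonresidue is still sought in the prime subfield $\field_p$, so the exponential factor remains $p^{1/2}$ rather than $q^{1/2}$; the remaining arithmetic is on polynomials of degree $O(s)$ whose coefficients are themselves degree-$<d$ polynomials over $\field_p$, which after bookkeeping of the compositum and tower steps gives $O(p^{1/2}s^3+s^4d^2)$, and $O(p^{1/2}s^4)$ in the prime-field case.

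The main obstacle is squarely the deterministic factorization step: no polynomial-time deterministic algorithm for factoring polynomials over finite fields is known, and every known deterministic route runs into the quadratic-nonresidue barrier, which is what forces the (currently unavoidable) $p^{1/2}$ factor. A second, more analytic difficulty is proving that the auxiliary prime $r$ can be taken polynomially bounded: this requires an effective bound on the least prime in an arithmetic progression together with a counting/pigeonhole argument that some such $r$ also meets the order and non-degeneracy conditions on $|\kos|$; obtaining a clean polynomial bound here, rather than merely the existence of infinitely many good $r$, is where the real work lies. Once those two ingredients are in place, assembling the tower and the compositum and tracking the running time to the stated form is routine.
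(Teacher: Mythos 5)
First, a point of reference: the paper does not prove this statement at all --- it is imported verbatim from Shoup's paper (Theorems~3.2 and~4.1 of~\cite{Sho90}), so there is no internal proof to compare against, and any proof you supply is necessarily a reconstruction of Shoup's. Judged on its own terms, your outer architecture is right (composita to reduce to prime-power degrees, towers for $\ell^k$, Artin--Schreier for $\ell=p$, and a single deterministic-factorization bottleneck responsible for the $p^{1/2}$), but the core step --- producing a degree-$\ell$ irreducible for a prime $\ell\neq p$ --- follows the wrong source, and that is a genuine gap. The construction via a Gauss period for an auxiliary prime $r\equiv 1\pmod{\ell}$ subject to an order condition on $|\kos|$ modulo $r$ is Adleman and Lenstra's method, and the existence of such an $r$ of size $\mathrm{poly}(\ell,\log|\kos|)$ is precisely the step that is only known under GRH: it is an effective Chebotarev/least-non-residue statement (you need $p$ to have index divisible by $\ell$ in $(\Zahl/r\Zahl)^{\times}$), not a least-prime-in-arithmetic-progression statement, so Linnik's theorem does not rescue it. You flag this as ``where the real work lies,'' but it is not work that can be completed unconditionally with known techniques; as written, your route does not yield the unconditional deterministic bounds claimed in the proposition.

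Shoup's actual argument avoids auxiliary primes altogether. His main technical theorem is that constructing an irreducible polynomial of degree $s$ over $\field_p$ is deterministically polynomial-time \emph{reducible to factoring polynomials over the prime field}: for each prime $\ell\mid s$ with $\ell\neq p$ one factors the $\ell$-th cyclotomic polynomial over $\field_p$ to realize the field $\field_{p^m}$ ($m$ the order of $p$ modulo $\ell$) together with a primitive $\ell$-th root of unity, and then locates an $\ell$-th power non-residue in that field by factoring a further, explicitly constructed, polynomially sized family of polynomials over $\field_p$; binomials $X^{\ell^k}-a$ built from that non-residue give the prime-power layers, and minimal-polynomial computations pull everything down to $\field_p$ (or to $\field_q$ for Theorem~4.1). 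The $p^{1/2}$ then enters exactly once, through the deterministic factoring subroutine --- and note that even there your accounting is slightly off: the exponential factor in deterministic factoring is not just the cost of exhaustively finding a quadratic non-residue (which by Burgess is $O(p^{1/(4\sqrt{e})+\epsilon})$), but the cost of the deterministic root-finding/splitting routine itself. Replacing your Adleman--Lenstra step by Shoup's reduction-to-factoring is the substantive fix; with that substitution the rest of your outline and the bookkeeping leading to $O(p^{1/2}s^4)$ and $O(p^{1/2}s^3+s^4d^2)$ would go through.
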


\begin{proposition}
\label{st:reptrm}
Consider a transversal matroid $\Mat$
with rank $r$ over $n$ elements, 
a presentation $G$ of $\Mat$, 
an isolating weight function $w$ on the edges of $G$,
and integers $s,t$ with $t = \max w(j,x)$ and $s$ larger than 
the maximum weight of a matching in $G$.
\begin{enumerate}
\item
There exists a deterministic algorithm
that, for every prime number $p$, 
provides a representation of $\Mat$
over the finite field with $p^{s}$ elements.
The running time
is polynomial in $n$, $p$ and $s$.
\item
There exists a deterministic algorithm
that, for each prime number 
$p > 2^{rt} \, r^{r/2}$,
provides a representation of $\Mat$
over $\field_p$.
The running time of the algorithm is
polynomial in $\log p$ and $n$.
\end{enumerate}
\end{proposition}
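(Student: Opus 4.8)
The plan is to use the classical Edmonds-style symbolic determinant trick, but with the variables $\alpha_{j,x}$ specialized via the isolating weight function so that the minimum-weight matching survives with a controllable coefficient. Concretely, I would set $\alpha_{j,x} = y^{w(j,x)}$ for a single transcendental or field element $y$. Then for a basis $B$ of $\Mat$, the determinant of the corresponding $r\times r$ submatrix $A_B$ expands as $\det A_B(y) = \sum_{\mu} \operatorname{sgn}(\mu)\, y^{w(\mu)}$, where $\mu$ runs over the perfect matchings of $G_B$. By the isolating property there is a unique perfect matching $\mu^\ast$ of minimum weight; its term $\pm y^{w(\mu^\ast)}$ is therefore the unique lowest-degree monomial and cannot be cancelled, so $\det A_B(y)$ is a nonzero polynomial in $y$ of degree less than $s$ (since $s$ exceeds the maximum matching weight) and with a nonzero coefficient in degree $w(\mu^\ast) \le rt$.

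\textbf{Part (1).} For a prime $p$, I would invoke Proposition~\ref{st:shoup} to construct, in time $O(p^{1/2}s^4)$ (polynomial in $p$ and $s$), an irreducible polynomial of degree $s$ over $\field_p$, hence a model of $\field_{p^s}$, and let $y$ be a root of that polynomial, i.e.\ a generator of $\field_{p^s}$ over $\field_p$. Each $\det A_B(y)$ is the evaluation at $y$ of a nonzero polynomial over $\field_p$ of degree strictly less than $s = [\field_{p^s}:\field_p]$; since the minimal polynomial of $y$ has degree exactly $s$, no such polynomial can vanish at $y$. Thus every basis of $\Mat$ gives a nonzero $r\times r$ minor and every nonbasis gives the zero polynomial (as in the symbolic setup), so the resulting $r\times n$ matrix over $\field_{p^s}$ is a representation of $\Mat$. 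Writing down the matrix requires evaluating $n$-many entries of the form $y^{w(j,x)}$ with $w(j,x)\le t < s$, which is polynomial in $n$, $s$ and $\log p$; combined with the cost of Proposition~\ref{st:shoup} the whole algorithm is polynomial in $n$, $p$, and $s$.

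\textbf{Part (2).} For a prime field $\field_p$, I would instead substitute $\alpha_{j,x} = a^{w(j,x)}$ for a concrete nonzero $a \in \field_p$, and I need $\det A_B(a) \neq 0$ simultaneously for all bases $B$. The cleanest way is to take $a$ to be (the reduction mod $p$ of) a small \emph{integer}, say $a=2$: then $\det A_B$, viewed as an integer, equals $\sum_\mu \operatorname{sgn}(\mu) 2^{w(\mu)}$, and by the isolating property its $2$-adic valuation is exactly $w(\mu^\ast)$, so $\det A_B \neq 0$ over $\Zahl$. I would bound $|\det A_B|$ by Hadamard's inequality: the columns of $A_B$ have entries among $1$ (the $0$ entries) and powers $2^{w(j,x)} \le 2^t$, and each column has at most $r$ nonzero entries, so each column has Euclidean norm at most $2^t\sqrt{r}$, giving $|\det A_B| \le (2^t\sqrt r)^r = 2^{rt} r^{r/2}$. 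Hence for any prime $p > 2^{rt} r^{r/2}$ the determinant $\det A_B$ is nonzero modulo $p$ for every basis $B$, and the matrix over $\field_p$ obtained by reducing the integer entries is a representation of $\Mat$. The algorithm just writes down the $n$ entries $2^{w(j,x)} \bmod p$, so its running time is polynomial in $n$ and $\log p$.

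\textbf{The main obstacle} is the verification that the symbolic construction transfers faithfully after the substitution $\alpha_{j,x}\mapsto y^{w(j,x)}$ (or $a^{w(j,x)}$): one must check both that nonbases still give identically-zero minors (immediate, since that holds already at the polynomial level) \emph{and}, more delicately, that every basis still gives a nonzero minor. The latter is exactly where the isolating hypothesis does the work, and it is worth spelling out carefully that a single lowest-order term with a $\pm1$ coefficient cannot be cancelled by the other matchings --- in Part (1) because the surviving polynomial has degree below the extension degree, and in Part (2) because the $2$-adic valuation pins it down and the size bound keeps it away from $0 \bmod p$. Everything else is bookkeeping: the running-time claims follow by combining Proposition~\ref{st:shoup} with the obvious cost of filling in an $r\times n$ matrix of prime-power-field elements.
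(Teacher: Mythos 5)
Your proposal is correct and follows essentially the same route as the paper: the substitution $\alpha_{j,x}=\alpha^{w(j,x)}$, the uniqueness of the minimum-weight matching forcing a lowest-degree term with coefficient $\pm 1$, Shoup's algorithm plus the degree bound $\deg < s$ for the extension-field case, and the evaluation at $2$ with Hadamard's inequality for the prime-field case. The only blemish is the parenthetical ``entries among $1$ (the $0$ entries)''---the zero entries of the biadjacency matrix remain zero, not one---but this does not affect the column-norm bound $2^t\sqrt{r}$ or anything downstream.
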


\begin{proof}
Take a variable $\alpha$ and put
$\alpha_{j,x} = \alpha^{w(j,x)}$
for every edge $(j,x)$ of $G$.
Take an arbitrary field $\kos$ and
assume that the entries of the matrix are polynomials
over $\kos$ in the variable $\alpha$.
Then the determinant of the submatrix corresponding
to a basis $B$ is a non-zero polynomial.
Indeed, the coefficient of the minimum degree 
equals either $1$ or $-1$
because it corresponds to the unique perfect matching in $G_B$ 
with minimum weight.
For each basis, the degree of that polynomial 
is at most the maximum weight of
a perfect matching, and hence less than $s$.

Consider an arbitrary prime number $p$ and $q = p^s$,
and take $\kos = \field_p$.
By using the algorithm given by 
Shoup~\cite{Sho90}, 
find an irreducible polynomial 
$f(\alpha)$ over $\field_{p}$ of degree $s$.
As we mentioned in Proposition~\ref{st:shoup},
that can be done  in time $O(p^{1/2} s^4)$
ignoring the powers of
$\log s$ and $\log p$.
Then the quotient ring
$\field_p[\alpha]/(f(\alpha))$
is isomorphic to the field $\field_q$,
an algebraic extension of $\field_p$.
The class of $\alpha$ in that quotient ring
is an element in $\field_q$
whose minimal polynomial over $\field_{p}$ is of degree $s$.
By identifying the entries of the matrix
with elements in $\field_q$, a representation
of the matroid $\Mat$ over that field is obtained.

In order to construct a 
representation over a prime field,
assume that $K$ is the field of real numbers,
and hence the entries of the matrix are assumed to be
real polynomials in the variable $\alpha$.
Those polynomials have integer coefficients
because they are either zero or a power of $\alpha$.
Therefore, for every basis $B$, 
the determinant of the corresponding submatrix
is a non-zero polynomial $h_B(\alpha)$ with integer coefficients.
Moreover, $h_B(2) \ne 0$ because
the coefficient of the minimum degree term is  $\pm 1$.
Put $\alpha = 2$ and let $A$ be the the resulting integer matrix.
Observe that $0 \le a_{j,x} \le 2^t$ for every entry of that matrix. 
For a basis $B$, the corresponding submatrix $A_B$ satisfies
\[
|\det A_B| \le 2^{rt}\, r^{r/2}
\]
by Hadamard's inequality.
Therefore, that determinant is
not a multiple of $p$
for any prime number 
$p$ larger than $2^{rt}\, r^{r/2}$,
and hence the matrix $A$ provides a 
representation of $\Mat$ over
the prime field $\field_p$.
\end{proof}

The most computationally expensive step in
the first algorithm is to find an irreducible
polynomial over $\field_{p}$ of degree $s$.
Since $p$ can be the same for all matroids in the family,
the computation time depends 
almost exclusively on the value of $s$,
and hence on the maximum
weight of the perfect matchings in the subgraphs $G_B$.
In addition, the value of $s$ determines the size
of the representations, and hence the efficiency of 
their applications as, for example, secret sharing schemes. 
The size of the representations given by the second construction
is $n r \log p$,
and hence it provides representations of size
$O(n r( r t + (r/2)\log r))$,
where $t$ is the maximum weight of the edges.

\section{Efficient Representations of Lattice Path Matroids}

In this section, we consider only
transversal matroids of rank $r$ with a representation
$G$ with $\indse = [r]$ and $\sgrse = [n]$.
For a set $B\subseteq \sgrse$, 
we notate $B = (x_1, \ldots, x_r)$
to indicate that its elements
are arranged in increasing order.  

We present in Proposition~\ref{st:suff}
a sufficient condition for the existence of
isolating weight functions with polynomial weights.
By combining it with Proposition~\ref{st:reptrm}, 
efficient representations
for lattice path matroids are obtained.
The following technical result is a consequence of
the \emph{rearrangement inequality}.

\begin{lemma}
\label{st:easy}
Let $(p_1, \ldots, p_r)$ and
$(q_1, \ldots, q_r)$  be sequences
of real numbers such that the first one is 
non-decreasing and 
the second one is non-increasing.
Then 
\[
p_1 q_1 + \cdots + p_r q_r \le
p_1 q_{\sigma 1} + \cdots + p_r q_{\sigma r} \le
p_1 q_{r} + \cdots + p_r q_{1}
\]
for every permutation $\sigma$.
Moreover, each of those bounds is 
attained only by one permutation  
if each sequence has distinct terms.
\end{lemma}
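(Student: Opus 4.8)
The plan is to derive Lemma~\ref{st:easy} directly from the classical rearrangement inequality together with a strictness analysis. First I would recall the rearrangement inequality in the form needed here: for a non-decreasing sequence $(p_1,\ldots,p_r)$ and \emph{any} sequence $(y_1,\ldots,y_r)$, the sum $\sum_j p_j y_{\sigma j}$ is minimized when $(y_{\sigma 1},\ldots,y_{\sigma r})$ is sorted in non-increasing order and maximized when it is sorted in non-decreasing order. Applying this with $y_j = q_j$ and noting that $(q_1,\ldots,q_r)$ is already non-increasing, I get that the identity permutation attains the lower bound $\sum_j p_j q_j$, and the reversal permutation $\tau$ with $\tau j = r+1-j$ attains the upper bound $\sum_j p_j q_{r+1-j} = p_1 q_r + \cdots + p_r q_1$. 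This gives the chain of inequalities for every permutation $\sigma$.

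For the ``moreover'' clause I would argue the strictness by a standard swapping/exchange argument rather than invoking a black-box strict rearrangement inequality, so the proof is self-contained. Suppose all $p_j$ are distinct (hence strictly increasing) and all $q_j$ are distinct (hence strictly decreasing). Consider a permutation $\sigma$ that is not the identity; then there exist indices $a < b$ with $\sigma a > \sigma b$, i.e.\ an inversion. Swapping the values $q_{\sigma a}$ and $q_{\sigma b}$ changes the sum by
\[
(p_b - p_a)(q_{\sigma a} - q_{\sigma b}),
\]
which is strictly positive since $p_b > p_a$ and $q_{\sigma a} > q_{\sigma b}$ (the latter because $q$ is strictly decreasing and $\sigma a > \sigma b$). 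Wait --- I need to be careful about the sign: moving toward the identity should \emph{decrease} the sum, so I would set up the swap so that it removes an inversion, and check that the change is $-(p_b-p_a)(q_{\sigma a}-q_{\sigma b}) < 0$. Hence any non-identity $\sigma$ gives a value strictly larger than the identity's value, so the lower bound is attained only by the identity. The argument for the upper bound is symmetric: compare $\sigma$ to the reversal $\tau$, locate a pair $a<b$ with $\sigma a < \sigma b$ (an ``anti-inversion''), and swap to show the sum strictly increases toward $\tau$'s value, so $\tau$ is the unique maximizer.

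The main obstacle, such as it is, is purely bookkeeping: getting the direction of the inequality right in the exchange step and making sure the single swap genuinely reduces the number of inversions (so that iterating terminates at the identity), which is the usual bubble-sort observation. A clean way to package this is: among all permutations attaining the minimum, pick one with the fewest inversions; if it is not the identity it has an adjacent inversion $\sigma a > \sigma(a{+}1)$, and swapping those two entries strictly decreases the sum, contradicting minimality. This avoids any induction on $r$ and keeps the write-up short. The same device, applied to anti-inversions and maximality, handles the upper bound. No deeper idea is needed; the lemma is exactly the strict rearrangement inequality, and the only thing to supply is the elementary justification of strictness.
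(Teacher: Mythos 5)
Your proposal is correct and takes essentially the same route as the paper, which states the lemma without proof as ``a consequence of the rearrangement inequality''; deriving the two bounds from the classical rearrangement inequality and establishing uniqueness via the adjacent-inversion (bubble-sort) exchange argument is precisely the standard proof being invoked. (One bookkeeping remark: since $q$ is strictly decreasing and $\sigma a > \sigma b$, one has $q_{\sigma a} < q_{\sigma b}$, and the change under the swap is $(p_b - p_a)(q_{\sigma a} - q_{\sigma b}) < 0$ with no extra minus sign; your intermediate formulas flip these signs, but your final packaged argument is the correct one.)
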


\begin{proposition}
\label{st:suff}
Let $\Mat$ be a transversal matroid 
such that, for each basis 
$B = (x_{1}, \ldots, x_{r})$, 
all pairs $(j,x_j)$ with $j \in \indse$
are edges of $G$.
Then $G$ admits an
isolating weight function with
maximum weight at most $(r-1)(n-1)$.
In addition, for each basis $B$,
the maximum weight of
the perfect matchings in $G_B$ is less than
\(
r (r-1)(n-1)/2
\).
\end{proposition}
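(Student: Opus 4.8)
The plan is to exhibit an explicit weight function on the edges of $G$ and verify it is isolating using the rearrangement inequality (Lemma~\ref{st:easy}). The natural choice, given the hypothesis that every basis $B = (x_1,\dots,x_r)$ uses all the ``diagonal'' edges $(j,x_j)$, is to set $w(j,x) = (j-1)(x-1)$ for each edge $(j,x)$ of $G$. Since $j \in [r]$ and $x \in [n]$, this is a non-negative integer with maximum value at most $(r-1)(n-1)$, which is the first claimed bound. It remains to check that with this weight, each subgraph $G_B$ has a unique minimum-weight perfect matching.

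So fix a basis $B = (x_1, \dots, x_r)$ with $x_1 < \dots < x_r$. A perfect matching of $G_B$ corresponds to a bijection $\sigma \colon [r] \to [r]$, namely matching $j$ to $x_{\sigma j}$ (this is legitimate because the vertices of $B$ are exactly $\{x_1,\dots,x_r\}$), and its weight is $\sum_{j=1}^r (j-1)(x_{\sigma j} - 1)$. Now apply Lemma~\ref{st:easy} with $p_j = j-1$ (non-decreasing) and $q_j = x_j - 1$: wait — I want to minimize the inner-product-type sum, and the rearrangement inequality in the form stated minimizes $\sum p_j q_{\sigma j}$ when one sequence is sorted oppositely to the other. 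Since $(p_j)$ is non-decreasing, the minimum is attained by pairing it with a \emph{non-increasing} arrangement of $(q_j)$; but $(q_j) = (x_j - 1)$ is itself non-decreasing, so I should instead set $q_j = -(x_j - 1)$, or equivalently reverse the roles. Concretely: $\sum_j (j-1)(x_{\sigma j}-1)$ is minimized precisely when $j \mapsto x_{\sigma j}$ is \emph{decreasing}, i.e. $\sigma$ is the order-reversing permutation, because the rearrangement inequality says $\sum p_j q_{\sigma j}$ with $(p_j)$ non-decreasing is smallest when $(q_{\sigma j})$ is non-increasing. Hmm — let me re-examine, because I actually want the identity matching to be forced, not the reversing one.

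The fix is to use the weight $w(j,x) = (r - j)(x - 1)$, or equivalently $w(j,x) = (j-1)(n-x)$; either way the diagonal edges $(j,x_j)$ get a weight pattern that makes the sorted-against-sorted pairing optimal. Take $w(j,x) = (j-1)(n-x)$: then $\sum_j w(j, x_{\sigma j}) = \sum_j (j-1)(n - x_{\sigma j})$, and setting $p_j = j-1$ (non-decreasing) and $q_j = n - x_j$ (non-increasing, since the $x_j$ are increasing), Lemma~\ref{st:easy} gives that this sum is minimized \emph{uniquely} by $\sigma = \mathrm{id}$ — provided the $x_j$ are distinct, which they are. This is the identity matching $j \mapsto x_j$, which is a valid matching of $G_B$ exactly by the hypothesis of the proposition. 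Hence $G_B$ has a unique minimum-weight perfect matching, so $w$ is isolating. The maximum edge weight is still at most $(r-1)(n-1)$.

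For the last claim, the maximum weight of a perfect matching in $G_B$ is $\max_\sigma \sum_j (j-1)(n - x_{\sigma j})$, which by the other half of Lemma~\ref{st:easy} equals $\sum_j (j-1)(n - x_{r+1-j})$. I would bound this crudely: each term $(j-1)(n - x_{r+1-j})$ is at most $(j-1)(n-1)$ — actually to get the stated bound $r(r-1)(n-1)/2$ it suffices to note the matching weight is at most $\sum_{j=1}^r (j-1)(n-1) = (n-1)\binom{r}{2} = r(r-1)(n-1)/2$, and strict inequality follows since, e.g., the term with $x_{\sigma j} = x_r = n$ (for some $j$) contributes $0 < (j-1)(n-1)$ whenever that $j > 1$, or more simply because not all $x_i$ can equal $1$. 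I expect the only genuinely delicate point is getting the direction of the rearrangement inequality right so that the \emph{identity} matching (the one guaranteed to exist) is the unique minimizer; once the weight function is chosen as $w(j,x) = (j-1)(n-x)$, everything else is bookkeeping.
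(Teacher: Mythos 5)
Your proof is correct and takes essentially the same route as the paper's: the same weight function $w(j,x) = (j-1)(n-x)$ built from $p_j = j-1$ and $q_x = n-x$, the same appeal to Lemma~\ref{st:easy} to get uniqueness of the minimum-weight perfect matching (valid because the $x_j$ are distinct), and the same crude bound $\sum_{j=1}^{r}(j-1)(n-1) = r(r-1)(n-1)/2$ for the maximum. The initial detour through $w(j,x)=(j-1)(x-1)$ is harmless, since you correct the direction of the rearrangement inequality before committing to the final weight.
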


\begin{proof}
For $j \in [r]$ and $x \in [n]$, 
take $p_j = j - 1$ and $q_x = n - x$. 
For every edge $(j,x)$, take the weight
$w(j,x) = p_j q_x$.
This is an isolating weight function
because, by Lemma~\ref{st:easy},
the perfect matching formed
by the edges $(j, x_{j})$ 
is the only one in $G_B$ with minimum weight.
Finally, by Lemma~\ref{st:easy} again,
the weight of a perfect matching in $G_B$ is at most 
\begin{equation}
\label{eq:bound}
(r-1)(n-1) + (r -2)(n-2) + \cdots + 1 \cdot(n-r+1)
\end{equation}
and hence less than
\(
r (r-1)(n-1)/2
\).
Smaller upper bounds can be
obtained from~(\ref{eq:bound}),
but they are not better than $O(r^2 n)$.
\end{proof}

By the following two propositions,
lattice path matroids
are the only transversal matroids
satisfying the sufficient condition in 
Proposition~\ref{st:suff}. 

\begin{proposition}
\label{st:charlpm1}
Let $\Mat$ be a lattice path matroid
and let $G$ be a presentation of $\Mat$
in the conditions of Proposition~\ref{st:interv}. 
If $B = (x_{1}, \ldots, x_{r})$ is a basis of $\Mat$, then
$(j, x_{j})$ is an edge of $G$ for every $j \in \indse$.
\end{proposition}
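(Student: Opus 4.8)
The plan is to use the interval presentation guaranteed by Proposition~\ref{st:interv}, together with Hall's marriage theorem, to show that in any basis the natural diagonal pairing $(j,x_j)$ is forced. Recall that with $\indse = [r]$ and the sets $A_j = [a_j,b_j]$ satisfying $1 = a_1 \le a_2 \le \cdots \le a_r$ and $b_1 \le b_2 \le \cdots \le b_r = n$, a set $B = (x_1,\ldots,x_r) \subseteq [n]$ (elements in increasing order) is a basis precisely when $G_B$ has a perfect matching, i.e.\ when $B$ is a transversal of $(A_1,\ldots,A_r)$. So what must be shown is: if $B$ is a transversal of this interval system, then in fact $x_j \in A_j = [a_j,b_j]$ for every $j$, i.e.\ $a_j \le x_j \le b_j$.

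The key step is to prove the two inequalities $x_j \ge a_j$ and $x_j \le b_j$ separately, each by a counting argument exploiting the monotonicity of the interval endpoints. For the upper bound $x_j \le b_j$: suppose $\varphi\colon B \to [r]$ is a matching witness, so $x \in A_{\varphi(x)}$ for all $x \in B$. Consider the $r - j + 1$ elements $x_j < x_{j+1} < \cdots < x_r$. Each such $x_i$ lies in some $A_{\varphi(x_i)} = [a_{\varphi(x_i)}, b_{\varphi(x_i)}]$, and the indices $\varphi(x_j),\ldots,\varphi(x_r)$ are distinct, hence at least one of them, say $\varphi(x_i)$, satisfies $\varphi(x_i) \ge j$. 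Then $x_j \le x_i \le b_{\varphi(x_i)} \le b_r = n$ — but that is too weak; the right move is instead to look at the elements $x_1,\ldots,x_j$ and the $j$ distinct indices $\varphi(x_1),\ldots,\varphi(x_j)$: one of them is $\ge j$, so $x_j \ge x_{?}$ doesn't directly help either. The clean argument: among the $j$ distinct values $\varphi(x_1),\ldots,\varphi(x_j) \in [r]$, the largest is $\ge j$; call the corresponding element $x_k$ (with $k \le j$), so $\varphi(x_k) \ge j$ and $x_j \ge x_k$; but we want an \emph{upper} bound on $x_j$. So instead take the $r-j+1$ elements $x_j,\ldots,x_r$: their images are $r-j+1$ distinct indices, so the smallest image is $\le j$, say $\varphi(x_k) \le j$ with $k \ge j$, giving $x_j \le x_k \le b_{\varphi(x_k)} \le b_j$ by monotonicity of the $b$'s. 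Symmetrically, the elements $x_1,\ldots,x_j$ have $j$ distinct images, so the largest is $\ge j$, say $\varphi(x_k) \ge j$ with $k \le j$, giving $x_j \ge x_k \ge a_{\varphi(x_k)} \ge a_j$ by monotonicity of the $a$'s. Combining, $a_j \le x_j \le b_j$, i.e.\ $x_j \in A_j$, so $(j,x_j)$ is an edge of $G$.

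One should also carry along the symmetric reformulation via the sets $C_x = [c_x,d_x]$ from part~(2) of Proposition~\ref{st:interv}, which may make one of the two inequalities more transparent, but the argument above using only the $A_j$ presentation should suffice. The main obstacle is purely bookkeeping: getting the pigeonhole direction right (does one count the bottom block or the top block of $B$, and which endpoint's monotonicity does one invoke) and making sure the indices $k$ stay on the correct side of $j$. There is no genuine difficulty beyond that — the statement is essentially the observation that an interval system admitting a transversal admits the ``greedy diagonal'' transversal, which is standard, but it needs to be phrased so that it pins down \emph{every} basis, not just the existence of one transversal.
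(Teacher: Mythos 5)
Your argument is correct, but it takes a genuinely different route from the paper. You fix an arbitrary matching witness $\varphi$ for the basis $B$ and extract both inequalities $a_j \le x_j \le b_j$ by pigeonhole: the $r-j+1$ distinct images $\varphi(x_j),\ldots,\varphi(x_r)$ cannot all exceed $j$, so some $x_k$ with $k \ge j$ has $\varphi(x_k) \le j$, giving $x_j \le x_k \le b_{\varphi(x_k)} \le b_j$; and symmetrically the $j$ distinct images $\varphi(x_1),\ldots,\varphi(x_j)$ cannot all be below $j$, giving $x_j \ge a_j$. The paper instead runs an exchange argument: it takes a perfect matching $P$ of $G_B$ maximizing the number of diagonal edges $(j,x_j)$, locates the least $k$ with $(k,x_k)\notin P$, and uses the interval monotonicity $a_k \le a_{\ell_1} \le x_k < x_{\ell_2} \le b_k \le b_{\ell_1}$ to swap two edges and produce a matching with strictly more diagonal edges, a contradiction. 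Both proofs are of comparable length and rest on the same monotonicity of the endpoints $a_j, b_j$; yours avoids the extremal choice of matching and is arguably more direct, while the paper's version makes the augmentation step explicit. One stylistic remark: your write-up retains a false start (``but that is too weak; the right move is instead\ldots''); in a final version you should delete the abandoned attempt and present only the working pigeonhole argument, stating cleanly which block of $B$ is counted for each of the two inequalities.
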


\begin{proof}
Suppose that there is a basis 
$B$ without that property.
Let $P$ be a perfect matching in $G_B$ with the
maximum number of edges of the form $(j,x_j)$
and take the minimum $k \in J$ such that
$(k,x_k)$ is not in $P$.
Since $P$ is a perfect matching, $k \le r-1$ and 
there exist $\ell_1, \ell_2 \in [k+1,r]$ such that 
$(\ell_1,x_{k})$ and $(k,x_{\ell_2})$ 
are edges in $P$.
Then 
\[
a_{k} \le a_{\ell_1} \le x_{k} 
< x_{\ell_2} \le b_{k} \le b_{\ell_1}
\]
which implies that $(k,x_{k})$ and 
$(\ell_1,x_{\ell_2})$ are edges of $G$.
Then
\[
P' = (P \sss \{(k, x_{\ell_2}), (\ell_1,x_k)\})
\cup \{(k,x_k),(\ell_1, x_{\ell_2})\}
\]
is a perfect matching in $G_B$ with more edges 
of the form $(j,x_j)$ than $P$.
\end{proof}

\begin{proposition}
\label{st:charlpm2}
Let $\Mat$ be a  transversal matroid without loops
that admits a presentation 
$G$  such that, 
for every basis 
$B = (x_{1}, \ldots, x_{r})$
and for every $j \in \indse$, 
the pair $(j, x_{j})$ is an edge.
Then $\Mat$ is a lattice path matroid.
\end{proposition}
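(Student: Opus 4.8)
The plan is to show that the presentation $G$ can be re-indexed on both sides so that it satisfies condition~(1) of Proposition~\ref{st:interv}, i.e.\ that each $A_j$ is an interval $[a_j,b_j]$ with the $a_j$ and $b_j$ non-decreasing. First I would fix a basis and use the hypothesis to set up a convenient ordering. Since $\Mat$ has no loops and (by our standing assumption) $|\indse|=r$ equals the rank, there is a perfect matching, and more importantly every element of $\sgrse$ lies in some basis. I would relabel $\sgrse=[n]$ so that the ``reference'' matching on a fixed basis $B_0=(x_1,\dots,x_r)$ pairs $j$ with $x_j$; the hypothesis already tells us $(j,x_j)\in G$ for \emph{every} basis listed in increasing order, so this reference structure is forced and compatible with any basis.

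Next I would prove that each neighborhood $A_j=N(\{j\})\subseteq\sgrse$ is an interval. Suppose $x<y<z$ with $x,z\in A_j$ but $y\notin A_j$. Since $\Mat$ has no loops, $y$ lies in some basis $B$; I would then exhibit a basis whose increasing enumeration has $y$ in position $j$, forcing $(j,y)$ to be an edge by hypothesis — a contradiction. The idea for producing such a basis is an exchange argument: start from a basis containing $y$, match it up, and use the edges $(j,x)$ and $(j,z)$ together with the interval-forcing hypothesis applied to auxiliary bases to slide $y$ into the $j$-th slot. This is essentially the converse direction of the manipulation done in the proof of Proposition~\ref{st:charlpm1}, run in reverse. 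The monotonicity of the endpoints, $a_1\le a_2\le\cdots$ and $b_1\le b_2\le\cdots$, should then follow from a similar swap: if $a_j>a_{j+1}$, or $b_j>b_{j+1}$, one can build a basis violating the ``$(i,x_i)$ is an edge'' property, since an out-of-order pair of intervals lets a matching put a wrong element in some position. Finally $1=a_1$ and $b_r=n$ hold because there are no loops (every $x\in\sgrse$ is in some $A_j$, and extremal $x$ must be covered by the extremal intervals).

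The main obstacle I expect is the core claim that every $A_j$ is an interval, and more precisely turning the hypothesis — which is phrased in terms of the \emph{sorted} enumeration of each basis — into a statement about a \emph{single} fixed index $j$. The subtlety is that ``position $j$ in the sorted order'' depends on the whole basis, so to conclude $(j,y)\in G$ I must construct a basis $B$ with $y$ as its $j$-th smallest element and with a perfect matching realizing it; the hypothesis then does the rest. Producing that basis from the weaker data ($x,z\in A_j$, $y\notin A_j$, and $y$ non-loop) is where the real combinatorial work lies, and I would handle it by a careful augmenting-path / exchange argument in $G$, choosing the other $r-1$ elements of $B$ greedily from below $y$ and above $y$ so that $y$ lands in slot $j$. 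Once intervals and their monotonicity are established, invoking Proposition~\ref{st:interv} finishes the proof that $\Mat$ is a lattice path matroid.
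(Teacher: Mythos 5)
Your plan hinges on the claim that, in the given presentation $G$ and with the given order on $\sgrse$, every neighborhood $A_j$ is an interval, to be proved by exhibiting, whenever $x<y<z$ with $x,z\in A_j$ and $y\notin A_j$ and $y$ not a loop, a basis whose $j$-th smallest element is $y$. That claim is false, and the basis you need does not in general exist. Take $r=3$, $n=4$, $A_1=\{1\}$, $A_2=\{2,3\}$, $A_3=\{1,3,4\}$. The bases are exactly $\{1,2,3\}$, $\{1,2,4\}$, $\{1,3,4\}$ (the set $\{2,3,4\}$ has only two neighbors), there are no loops, and for each basis listed in increasing order every pair $(j,x_j)$ is an edge, so the hypothesis of the proposition holds. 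Yet $A_3=\{1,3,4\}$ is not an interval: with $x=1$, $y=2$, $z=3$ there is no contradiction to extract, because no basis can have $2$ as its third smallest element. The underlying matroid \emph{is} a lattice path matroid, but the presentation $G$ you are handed need not be an interval presentation; the hypothesis only forces certain edges to be \emph{present} and says nothing about extra edges such as $(3,1)$ above. Re-indexing does not rescue the argument as you describe it, since the hypothesis is tied to the given order on $\sgrse$ (it refers to the sorted enumeration of each basis), and you also explicitly defer the construction of the required basis (``where the real combinatorial work lies''), which is precisely the step that cannot be carried out.

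The paper sidesteps this by not trying to show that $G$ itself is an interval presentation. It takes the lexicographically first and last bases $B_1=(a_1,\ldots,a_r)$ and $B_2=(b_1,\ldots,b_r)$ and proves that the interval system $([a_j,b_j] : j\in\indse)$ is a (possibly different) presentation of $\Mat$: first, an exchange argument shows every basis $(x_1,\ldots,x_r)$ satisfies $x_j\in[a_j,b_j]$; second, using the no-loops hypothesis, every pair $(j,x)$ with $x\in[a_j,b_j]$ is shown to be an edge of $G$, so the interval system presents exactly $\Mat$. In the example above this yields $[1,1],[2,3],[3,4]$, discarding the offending edge $(3,1)$. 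If you want to salvage your approach, you should redirect it toward constructing such a new interval presentation rather than proving a structural property of the given one.
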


\begin{proof}
Let $B_1 = (a_1, \ldots, a_r)$
and $B_2 = (b_1, \ldots, b_r)$ be the
first and last bases of $\Mat$ 
in the lexicographic order.
We are going to prove that the sequence of sets
$([a_j,b_j] \, :\, j \in \indse)$ is a presentation of $\Mat$,
and hence it is a lattice path matroid.
For two distinct bases $B,B'$,
we notate $B \ll B'$
if $B$ precedes $B'$ in the lexicographic order.
We prove first that
$x_j  \in [a_j,b_j]$ for each $j \in [r]$
if $(x_1, \ldots, x_r)$ is a basis.
Suppose that there is a basis with $x_j < a_j$ 
for some $j \in [r]$.
Take $B$ the first such basis 
in the lexicographic order
and the minimum $j \in [r]$ with $x_j < a_j$.
Since $B_1 \ll B$, the minimum  $k \in [r]$ with
$x_{k} > a_{k}$ satisfies $k < j$. 
Then $B' = (B \sss \{x_{k}\}) \cup \{a_{k}\}$
is another basis in the same situation with
$B' \ll B$, a contradiction.
Symmetrically, $x_j \le b_j$ for each $j \in [r]$.
We prove next that
$(j,x)$ is an edge if $x \in [a_j,b_j]$.
Since $x$ is not a loop, 
there is an edge $(k,x)$.
If $k > j$ and $x \ne b_j$, consider the basis
$B = (a_1, \ldots, a_{j-1}, b_j, \ldots, b_r)$.
Then $(B \sss \{b_k\}) \cup \{x\}$ is a basis
and $x$ is its $j$-th element, 
which implies that $(j,x)$ is an edge.
Symmetrically, the same happens if $k < j$ and $x \ne a_j$.
\end{proof}

The following result is a direct consequence of
Propositions~\ref{st:reptrm},~\ref{st:suff}, 
and~\ref{st:charlpm1}.

\begin{proposition}
\label{st:replpm}
Given a presentation of a lattice path matroid $\Mat$
with rank $r$ on $n$ elements
in the conditions of Proposition~\ref{st:interv},
there are two efficient constructions of 
representations for $\Mat$, which are described in the following.
\begin{enumerate}
\item
There is an efficient deterministic
algorithm to find a representation of $\Mat$
over the finite field with $q = p^s$ elements, where 
\(
s = r(r-1)(n-1)/2
\)
and $p$ is an arbitrarily chosen prime number.
The running time of the algorithm is
polynomial in $p$ and the size $n$ of the ground set.
\item
For every prime number
$p > 2^{r(r-1)(n-1)} r^{r/2}$,
there is an efficient deterministic
algorithm to find a representation of $\Mat$
over $\field_p$.
The running time of the algorithm is
polynomial in $\log p$ and the size $n$ of the ground set.
\end{enumerate}
\end{proposition}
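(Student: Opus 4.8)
The plan is to obtain the statement by composing the three results that precede it, so that essentially nothing new is needed beyond keeping track of parameters. First I would note that the given presentation $G$ of the lattice path matroid $\Mat$ is, by hypothesis, in the conditions of Proposition~\ref{st:interv}; hence Proposition~\ref{st:charlpm1} applies and tells us that for every basis $B=(x_1,\ldots,x_r)$ of $\Mat$ and every $j\in\indse$ the pair $(j,x_j)$ is an edge of $G$. This is exactly the hypothesis of Proposition~\ref{st:suff}, which I would then invoke to obtain an explicit isolating weight function on the edges of $G$, namely $w(j,x)=(j-1)(n-x)$, whose maximum value satisfies $t\le(r-1)(n-1)$ and which moreover makes the weight of every perfect matching in every subgraph $G_B$ strictly smaller than $r(r-1)(n-1)/2$.

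Next I would feed the triple consisting of $G$, this weight function $w$, and the value $t$ into Proposition~\ref{st:reptrm}. For the first construction I would set $s=r(r-1)(n-1)/2$, which is an integer because $r(r-1)$ is even and which, by the second conclusion of Proposition~\ref{st:suff}, strictly exceeds the weight of every perfect matching in every $G_B$ (hence of every matching of $G$); Part~1 of Proposition~\ref{st:reptrm} then yields, for an arbitrarily chosen prime $p$, a deterministic algorithm that produces a representation of $\Mat$ over $\field_{p^s}$ in time polynomial in $n$, $p$ and $s$. Since $r\le n$ we have $s=O(r^2n)=O(n^3)$, so the running time is polynomial in $n$ and $p$ and the element size $s\log p$ is polynomial in $n$, which is the required sense of efficiency. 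For the second construction I would observe that $t\le(r-1)(n-1)$ forces $2^{rt}r^{r/2}\le 2^{r(r-1)(n-1)}r^{r/2}$, so every prime $p$ larger than the latter quantity satisfies the hypothesis $p>2^{rt}r^{r/2}$ of Part~2 of Proposition~\ref{st:reptrm}, which then outputs a representation of $\Mat$ over $\field_p$ in time polynomial in $\log p$ and $n$.

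I do not expect any genuine obstacle at this point, because all of the substance lies in the results being quoted: the structural fact that, for every basis $B=(x_1,\ldots,x_r)$ of a lattice path matroid, every pair $(j,x_j)$ is an edge of the presentation (Proposition~\ref{st:charlpm1}), the construction and correctness of a \emph{polynomially bounded} isolating weight function, obtained through the rearrangement inequality of Lemma~\ref{st:easy} (Proposition~\ref{st:suff}), and the passage from such a function to an actual representation by way of Shoup's irreducible-polynomial algorithm and Hadamard's inequality (Proposition~\ref{st:reptrm}). The only points I would be careful about are quantitative: verifying that the weight bound in Proposition~\ref{st:suff} really is $O(r^2n)$, hence polynomial in $n$ --- which is precisely what turns a merely deterministic construction into an \emph{efficient} one in the sense fixed in the introduction --- and choosing $s$ as an integer strictly dominating all the matching weights that occur. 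Compared with the trivial isolating choice $w(e_k)=2^k$, the whole point, already settled in Propositions~\ref{st:charlpm1} and~\ref{st:suff}, is that here the weights stay polynomially small.
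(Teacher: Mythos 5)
Your proposal is correct and follows exactly the paper's route: the paper gives no separate proof, stating only that the result ``is a direct consequence of Propositions~\ref{st:reptrm},~\ref{st:suff}, and~\ref{st:charlpm1},'' which is precisely the composition you carry out. Your bookkeeping of the parameters ($t\le(r-1)(n-1)$, $s=r(r-1)(n-1)/2$ exceeding all matching weights, and $2^{rt}r^{r/2}\le 2^{r(r-1)(n-1)}r^{r/2}$) correctly fills in the details the paper leaves implicit.
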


The construction of hierarchical threshold 
secret sharing schemes in~\cite{Tas07},
which uses Birkhoff interpolation, provides another 
efficient deterministic algorithm 
to find representations
of nested matroids over prime fields 
$\field_p$ with
\[
p > (r-1)! \; 2^{-r+2} (r - 1)^{(r-1)/2} n^{(r-1)(r-2)/2}  
\]
The second construction in Proposition~\ref{st:replpm}
yields less efficient representations, 
but it applies to all lattice path matroids.

We present next an improvement to the first algorithm
in Proposition~\ref{st:replpm} for
lattice path matroids with 
a relatively small number of clonal classes.
It is equivalent to the constructions of hierarchical
vector secret sharing schemes
in~\cite{Bri89,CTL22}.

Take $\indse = [r]$, $\sgrse = [n]$, and
integers $t_i$ with 
$1 = t_1 < t_2 < \cdots < t_m < t_{m+1}= n + 1$.
Consider the partition 
$\Pi = (\sgrse_i \,:\, i \in [m])$
of $S$ with $S_i = [t_i, t_{i+1} - 1]$.
For every $x \in \sgrse$, 
put $\pi(x) = i$ if $x \in S_i$.
Consider a bipartite graph $G$ in the 
conditions of Proposition~\ref{st:interv}
such that, for each $i \in [m]$, 
all vertices in $S_i$
have the same neighbors.
Then $G$ is a presentation of a 
$\Pi$-uniform lattice path matroid $\Mat$.
Observe that the port of $\Mat$ 
at the element $1 \in \sgrse$
is a hierarchical access structure in which 
all players in the same part are
hierarchically equivalent.

As we did before, we replace the non-zero 
entries of the biadjacency matrix of $G$
with polynomials in the variable $\alpha$
over some finite field.
Take a prime power $q$ such that
$q > |S_i| = t_{i+1} - t_i$ for every $i \in [m]$.
For each $i \in [m]$, take
$t_{i+1} - t_i$ distinct non-zero elements
$(\beta_{x} \,:\, x \in S_i)$ in the finite field $\field_q$.
For $j \in \indse$ and $x \in \sgrse$, take $p_j = j-1$
and $q_x = m - \pi(x)$, and
consider on the edges of $G$ the weight function 
$w(j,x) = p_j q_x$.
Finally, consider the 
matrix $H$ that is obtained by
replacing the entry in the biadjacency matrix of $G$
corresponding to the edge $(j,x)$ 
with $\beta_x^{j-1} \alpha^{w(j,x)}$.

We prove next that, for every basis $B$ of $\Mat$,
the determinant of the submatrix $H_B$ 
formed by the corresponding columns
is a non-zero polynomial.
Even though the chosen weight function is not isolating,
we can check that the
coefficient of the minimum degree term is non-zero. 
Indeed, let $B = (x_1, \ldots, x_r)$ be a basis of $\Mat$. 
By Lemma~\ref{st:easy}, 
the perfect matching $((j,x_j) \,:\, j \in \indse)$
has minimum weight, but 
there are other perfect matchings 
in $G_B$ with the same weight,
namely the ones of the form
$((j,x_{\sigma j}) \,:\, j \in \indse)$,
where $\sigma$ is any permutation
such that $\pi(x_{\sigma j}) = \pi(x_j)$
for every $j \in \indse$.
The entries corresponding to the edges
of $G_B$ involved in those perfect
matchings lie on square submatrices
on the diagonal of $H_B$,
one for each $i \in [m]$ with $B \cap S_i \ne \emptyset$.
The determinant of each of those submatrices is of the form
$\alpha^{\ell_i} \Delta_i$, where $\Delta_i$ is
the determinant of a Vandermonde-like matrix, and hence non-zero.
Therefore, the coefficient of the minimum degree term
of $\det H_B$ is equal to $\prod_i \Delta_i \ne 0$.
Observe that the weight of a perfect matching
in any subgraph $G_B$ is less than
$r(r-1)(m-1)/2$.
At this point, the following result has been proved.

\begin{proposition}
\label{st:replpmclon}
There exists a deterministic algorithm
that, given an $m$-uniform 
lattice path matroid $\Mat$ with the conditions above,  
provides a representation of $\Mat$
over a finite field with $q^s$ elements,
where $q$ is a prime power larger than the number of 
elements in each part
and $s = r(r-1)(m-1)/2$.
The running time of the algorithm is
polynomial in $q$ and the size $n$ of the ground set.
\end{proposition}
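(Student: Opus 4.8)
The plan is to make rigorous the construction sketched in the paragraphs preceding the statement. Fix a presentation $G$ of $\Mat$ in the conditions of Proposition~\ref{st:interv} in which all vertices of each part $\sgrse_i$ have the same neighbours, and write $\pi(x)=i$ for $x\in\sgrse_i$. First I would choose a prime power $q$ with $q>|\sgrse_i|$ for every $i$ and, within each part, distinct nonzero elements $(\beta_x : x\in\sgrse_i)$ of $\field_q$; this is possible and takes time polynomial in $n$. Setting $p_j=j-1$ and $q_x=m-\pi(x)$ and $w(j,x)=p_jq_x$, form the $r\times n$ matrix $H$ over $\field_q[\alpha]$ obtained from the biadjacency matrix of $G$ by replacing the entry indexed by an edge $(j,x)$ with $\beta_x^{\,j-1}\alpha^{w(j,x)}$.

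The heart of the argument is to show that for every basis $B=(x_1,\dots,x_r)$ of $\Mat$ the determinant of the submatrix $H_B$ on the columns of $B$ is a nonzero element of $\field_q[\alpha]$. Expand $\det H_B$ as a signed sum over permutations $\sigma$ of $[r]$; a term survives only if every $(j,x_{\sigma j})$ is an edge of $G$, and its $\alpha$-degree is $\sum_j w(j,x_{\sigma j})=\sum_j p_j q_{x_{\sigma j}}$. Since $B$ is listed in increasing order and $\pi$ is non-decreasing along $[n]$, the sequence $(q_{x_j})_j$ is non-increasing while $(p_j)_j$ is strictly increasing, so by Lemma~\ref{st:easy} the permutations minimising this sum are precisely those that permute the columns of $B$ within each part. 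By Proposition~\ref{st:charlpm1} the diagonal matching $((j,x_j):j\in\indse)$ does consist of edges, so this minimum is attained, and the minimum-degree coefficient of $\det H_B$ is $\alpha$-independent: along the block-diagonal decomposition of $H_B$ induced by the parts it factors as $\prod_i\Delta_i$, where $\Delta_i$ is a (generalised) Vandermonde determinant $\det(\beta_x^{\,j-1})$ over the contiguous block of rows $j$ and columns $x\in B\cap\sgrse_i$ matched inside part $i$. Each $\Delta_i\ne 0$ because the $\beta_x$ in part $i$ are distinct and nonzero, hence $\det H_B\ne 0$. I would also bound the degree: by the same rearrangement estimate as in Proposition~\ref{st:suff}, now with $q_x=m-\pi(x)$ in place of $n-x$, the weight of any perfect matching of $G_B$ is less than $s:=r(r-1)(m-1)/2$.

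It remains to specialise $\alpha$ to an element of degree $s$ over $\field_q$. Using Shoup's algorithm (Proposition~\ref{st:shoup}), construct an irreducible polynomial $f(\alpha)$ of degree $s$ over $\field_q$; then $\field_q[\alpha]/(f(\alpha))\cong\field_{q^s}$, and the class of $\alpha$ has minimal polynomial over $\field_q$ of degree $s$, which exceeds $\deg(\det H_B)$ for every basis $B$. Hence no $\det H_B$ vanishes under the substitution, and the resulting matrix over $\field_{q^s}$ is a representation of $\Mat$. For the running time, write $q=p^d$; choosing $q$ and the $\beta_x$ is polynomial in $n$, and by Proposition~\ref{st:shoup} the irreducible polynomial is found in time $O(p^{1/2}s^3+s^4d^2)$ up to logarithmic factors, which is polynomial in $q$ and $n$ since $s=O(r^2m)=O(n^3)$.

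I expect the only delicate point to be the identification of the minimum-degree coefficient in the second step: one must use the equality case of the rearrangement inequality carefully to be sure that exactly the within-part permutations contribute and no cross terms of the same $\alpha$-degree are missed, and one must check that under the diagonal matching the rows assigned to a given part form a contiguous interval so that $\Delta_i$ genuinely has Vandermonde form. Everything else is bookkeeping, and the specialisation and complexity estimates follow the pattern already used in the proof of Proposition~\ref{st:reptrm}.
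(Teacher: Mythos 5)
Your proposal is correct and follows essentially the same route as the paper: the same matrix $H$ with entries $\beta_x^{\,j-1}\alpha^{p_jq_x}$, the same identification of the minimum-degree coefficient of $\det H_B$ as a product of nonzero Vandermonde-type determinants via Lemma~\ref{st:easy} and Proposition~\ref{st:charlpm1}, the same degree bound $s=r(r-1)(m-1)/2$, and the same specialisation through Shoup's algorithm. The ``delicate points'' you flag (the equality case of the rearrangement inequality with repeated values of $q_x$, and the contiguity of the row blocks per part) are exactly the places where the paper is also brief, and they do check out since the parts $\sgrse_i$ are intervals of $[n]$ and $B$ is listed in increasing order.
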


This algorithm improves on the first one in 
Proposition~\ref{st:replpm} if the number of parts $m$ is 
small in relation to the size of the ground set.
Even though $q$ cannot be arbitrarily small, 
the degree $s$ of the extension can be much smaller and,
as we discussed before, this is the main 
parameter to be taken into account.

Every bi-uniform matroid (that is, $m=2$) is a lattice path matroid,
and hence the algorithm in Proposition~\ref{st:replpmclon}
provides representations with $s = r(r-1)/2$.
Nevertheless, the algorithm proposed in~\cite{BPWX12}
is in general more efficient 
because the degree of the extension is
$s = d (d-1)/2$, where $d = r(S_1) + r(S_2) - r$.

\section*{Acknowledgment}
Thanks to Anna de Mier for an enlightening discussion
about transversal matroids.
The author's work was supported by 
the Spanish Government
under projects PID2019-109379RB-I00
and PID2021-124928NB-I00.


\end{document}